\newtheorem{theorem}{Theorem}
\newtheorem{lemma}{Lemma}
\newcommand{\Tr}{{\rm Tr\,}}
\theoremstyle{definition}
\newtheorem{definition}{Definition}
\newtheorem{assumption}{Assumption}
\newtheorem{remark}{Remark}
\begin{document}
\title[Regular finite fuel stochastic control problems]{Regular finite fuel stochastic control problems with exit time}

\author{Dmitry B. Rokhlin}
\author{Georgii Mironenko}

\address[D.B. Rokhlin and G. Mironenko]{Institute of Mathematics, Mechanics and Computer Sciences,
              Southern Federal University,
Mil'chakova str., 8a, 344090, Rostov-on-Don, Russia}
\email[D.B. Rokhlin]{rokhlin@math.rsu.ru}
\email[G. Mironenko]{georim89@mail.ru}
\thanks{The research of the D.B.\,Rokhlin is supported by Southern Federal University, project 213.01-07-2014/07.}

\begin{abstract} We consider a class of exit time stochastic control problems for diffusion processes with discounted criterion, where the controller can utilize a given amount of resource, called "fuel". In contrast to the vast majority of existing literature, concerning the "finite fuel" problems, it is assumed that the intensity of fuel consumption is bounded. We characterize the value function of the optimization problem as the unique continuous viscosity solution of the Dirichlet boundary value problem for the correspondent Hamilton-Jacobi-Bellman (HJB) equation. Our assumptions concern the HJB equations, related to the problems with infinite fuel and without fuel. Also, we present computer experiments, for the problems of optimal regulation and optimal tracking of a simple stochastic system with the stable or unstable equilibrium point.
\end{abstract}
\subjclass[2010]{93E20, 49L25, 49N90, 93B40}
\keywords{Finite fuel, exit time, viscosity solution, optimal  correction, optimal tracking}

\maketitle

\section{Introduction} 
\setcounter{equation}{0}

Consider a controller, whose aim is to keep a stochastic system $X$ in a prescribed domain $G$ as long as possible. An influence on the system requires the resource (or fuel) expenditure. The problem is to utilize the given resource amount in an optimal way.

The study of diffusion stochastic control problems with bounded amount of fuel was initiated in \cite{BatChe67b}. The problem was to bring a space-ship close to a stochastic target. Afterwards, the "finite fuel" issues were developed, almost exclusively, in the singular stochastic control paradigm of \cite{BatChe67a,BatChe67b}. In this framework it is always optimal to keep the system in a "no-action region". An essential step was made in \cite{BenSheWit1980}. This paper investigated the problem of optimal tracking of a Wiener process by a process, whose variation is bounded by a given initial fuel amount. An explicit solution was obtained for quadratic discounted criterion in the infinite horizon case. In particular, it was mentioned that the optimal no-actions region becomes wider as the available fuel amount decreases.

More general problems were then studied, e.g., in \cite{KarShr86,BriShr92,KarOcoWanZer00}. The case of finite horizon was considered in \cite{Kar85,ChoMenRob85,ElKKar88}. Quite general results, concerning the characterization of the value function and the existence of optimal control strategies were obtained in \cite{DufMil02,MotSar07,MotSar08a,MotSar08b}. A lot of studies were motivated by applications. We mention the problems of optimal correction of motion \cite{Che71}, controlling a satellite \cite{Jac83,Jac99,Jac02}, reaching the goal by a player \cite{SudWee92,Wee92}, optimal liquidation and trade execution \cite{PemZhaYin07,GatSch11,BiaWuZhe14,BecBilFre15}.

In contrast to the vast majority of existing literature, we assume that the resource (or fuel) intensity consumption is bounded.
In fact, only \cite{PemZhaYin07,BiaWuZhe14} from the above references, adopt the same assumption. Confining ourselves to classical controls, we deal with simpler mathematical problems which still can present interesting effects and have a wide range of applications.

To give the precise formulation of our problem consider a standard $m$-dimensional Wiener process $W=(W^1,\dots W^m)$ defined on a
probability space $(\Omega,\mathcal F,\mathsf P)$. Denote by $\mathbb F=(\mathscr F_s)_{s\ge 0}$ the minimal augmented natural filtration of $W$. Let the controlled process $X=(X^1,\dots,X^d)$ be governed by the system of stochastic differential equations
\begin{equation} \label{eq:1.1}
dX_t=b(X_t,\alpha_t) dt+\sigma(X_t,\alpha_t) dW_t, \quad X_0=x.
\end{equation}
An $\mathbb F$-progressively measurable process $\alpha \in A=[\underline a, \overline a]$, $\underline a\le 0\le \overline a$ is regarded as the intensity of resource consumption. We assume that the components of the drift vector $b:\mathbb R^d\times A\mapsto\mathbb R^d$ and the diffusion matrix $\sigma:\mathbb R^d\times A\mapsto\mathbb R^d\times\mathbb R^m$ are continuous, and
$$ |b(x,a)-b(y,a)|+|\sigma(x,a)-\sigma(y,a)|\le K|x-y|$$
with some constant $K$ independent of $a$. Note that this inequality implies also the linear growth condition:
$$ |b(x,a)|+|\sigma(x,a)|\le K'(1+|x|).$$
Thus, there exist a unique $\mathbb F$-adapted strong solution of (\ref{eq:1.1}) on $[0,\infty)$: see \cite{Kry80} (Chap. 2, Sect. 5). The resource amount $Y$ satisfies the equation
\begin{equation} \label{eq:1.2}
dY_t=-|\alpha_t|dt,\quad Y_0=y.
\end{equation}

We call a process $\alpha$ \emph{admissible}, and write $\alpha \in \mathcal A(x,y)$, if $Y_t \ge 0$, $t\ge 0$. The admissibility means that the resource overrun is prohibited. The solution of (\ref{eq:1.1}), (\ref{eq:1.2}) is denoted by $X^{x,y,\alpha}, Y^{x,y,\alpha}$.

Let $G\subset \mathbb R^d$ be an open set. It will be convenient to assume that $0\in G$. In general, we do not require $G$ to be bounded. Denote by
$\theta^{x,y,\alpha}=\inf\{t\ge 0:X_t^{x,y,\alpha}\not \in G\}$ the exit time of $X^{x,y,\alpha}$ from $G$.
The objective functional $J$ and the value function $v$ are defined by
\begin{equation} \label{eq:1.3}
J(x,y,\alpha)=\mathsf E\int_0^{\theta^{x,y,\alpha}} e^{-\beta t} f(X_t^{x,y,\alpha},\alpha_t)\,dt, \qquad
   v(x,y)=\sup_{\alpha\in\mathcal A(x,y)} J(x,y,\alpha),
\end{equation}
where $\beta>0$, and $f:\mathbb R^d\times A\mapsto\mathbb R$ is a bounded continuous function. Note that for $f=1$ we obtain the risk-sensitive criterion
\begin{equation} \label{eq:1.4}
J(x,y,\alpha)=\frac{1}{\beta}\left(1-\mathsf E e^{-\beta \theta^{x,y,\alpha}}\right),
\end{equation}
related to the maximization of the expected time $\mathsf E \theta^{x,y,\alpha}$ before leaving $G$.
The minimization of $\mathsf E e^{-\beta \theta^{x,y,\alpha}}$, as compared to the maximization of $\mathsf E \theta^{x,y,\alpha}$, produces the controls for which the probability of an early exit is smaller (see \cite{DupMcE97,ClaVin12}).

As is mentioned above, suitably formulated problems of this sort appear in various applications. We indicate one more example, which motivated the present work. Let $X$ describe the exchange rate between a domestic and a foreign currency. The controller is a central bank, trying to support the national currency and, thus, to hold $X$ above the level $l$ (an actual problem for Russian Ruble). 
The available amount of the foreign currency to be sold in the market is $Y$. Mathematically this problem is quite similar to that of \cite{Jac83}.
However, the use of regular controls (with bounded intervention intensity) may result in exit of $X$ from $G=(l,\infty)$ before the currency resource (the "fuel") $Y$ is exhausted. Within our model, the central bank choose to stop interventions at the occurrence of the stopping time $\theta$, regarding it as a signal to reduce the level $l$ or to change the set of monetary policies. It should be noted that there is an extensive literature concerning the exchange rates regulation. Without going into further discussion, we only mention that the prevailing paradigm is to model currency interventions by impulse controls: see, e.g., \cite{ÑadZap00,BenLonPerSet12} and references therein.

The present paper is organized as follows. In Section \ref{sec:2} we prove that the value function (\ref{eq:1.3}) is the unique continuous viscosity solution of the correspondent Hamilton-Jacobi-Bellman (HJB) equation  in the half-cylinder $G\times (0,\infty)$ with Dirichlet boundary conditions: see Theorem \ref{th:1}. The proof is based on the stochastic Perron method \cite{BaySir13}, adapted to exit time problems in \cite{Rok14}, and does not appeal to the dynamic programming principle. Our Assumptions \ref{as:1}-\ref{as:3} are of analytical nature. They concern the well-posedness of the boundary value problems for the HJB equations, related to the problems with infinite fuel and without fuel, as well as some nice properties of the solution of the latter.

Theorem \ref{th:1} allows to justify the convergence of appropriate finite difference schemes. In Sections \ref{sec:3} and \ref{sec:4} we present computer experiments, for the problems of optimal regulation and optimal tracking of a simple stochastic system with the stable or unstable equilibrium point.

\section{Characterization of the value function} \label{sec:2}
\setcounter{equation}{0}
For an open set $\mathcal O\subset\mathbb R^d$ consider the differential equation
\begin{equation} \label{eq:2.1}
F(x,u(x),u_x(x),u_{xx}(x))=0,\quad x\in\mathcal O,
\end{equation}
where $u_x$ is the gradient vector and $u_{xx}$ is the Hessian matrix. It is assumed that the function $F:\mathcal O\times\mathbb R\times\mathbb R^n\times\mathbb S^n$ is continuous and satisfies the monotonicity property:
$$ F(x,r,p,X)\le F(x,s,p,Y) \quad \textrm{whenever } r\le s\ \textrm{and } X-Y\ \textrm{is positive semidefnite}.$$
Here $\mathbb S^n$ is the set of symmetric $n\times n$ matrices.

Let $g$ be a continuous function on $\partial\mathcal O $. We consider two variants of the boundary conditions:
\begin{equation} \label{eq:2.2}
u=g\ \textrm{on } \partial\mathcal O ,
\end{equation}
\begin{equation} \label{eq:2.3}
u=g\ \textrm{or } F(x,u,u_x,u_{xx})=0\ \textrm{on } \partial\mathcal O .
\end{equation}

The equation (\ref{eq:2.1}), as well as the boundary conditions (\ref{eq:2.2}), (\ref{eq:2.3}), should be understood in the viscosity sense (the classical reference is \cite{CraIshLio92}). Recall that a bounded upper semicontinuous (usc) function $u$ is called a \emph{viscosity subsolution} of the equation (\ref{eq:2.1}) and the boundary condition (\ref{eq:2.3}) (resp., (\ref{eq:2.2})) if for any $z\in\overline{\mathcal O}$, $\varphi\in C^2(\mathbb R^n)$, such that $z$ is a local maximum point of $u-\varphi$ in $\overline{\mathcal O}$, we have
$$F(z,u(z),\varphi_x(z),\varphi_{xx}(z))\le 0\quad \textrm{if } z\in\mathcal O,$$
$$ u(z)\le g(z)\ \textrm{or } F(z,u(z),\varphi_x(z),\varphi_{xx}(z))\le 0\quad \textrm{if } z\in\partial\mathcal O,$$
$$ (\textrm{resp., } u(z)\le g(z)\quad \textrm{if }\ z\in\partial\mathcal O).$$

The notion of a \emph{viscosity supersolution} is defined symmetrically. One should consider a bounded lower semicontinuous (lsc) function $u$ and, assuming that $z$ is a local minimum point of $u-\varphi$ in $\overline{\mathcal O}$, postulate the reverse inequalities.

A bounded function $u$ is called a \emph{viscosity solution} of (\ref{eq:2.1}), (\ref{eq:2.3}) (or (\ref{eq:2.1}), (\ref{eq:2.2})) if its usc and lsc envelopes:
$$ u^*(x)=\inf_{\varepsilon>0}\sup\{ u(y): y\in B_\varepsilon(x)\cap\overline {\mathcal O}\},\ \
   u_*(x)=\sup_{\varepsilon>0}\inf\{ u(y): y\in B_\varepsilon(x)\cap\overline {\mathcal O}\}$$
are respectively viscosity sub- and supersolutions of these equations. Here $B_\varepsilon(x)$ is the open ball in $\mathbb R^n$ centered at $x$ with radius $\varepsilon$. Following \cite{BarSou91}, we say that (\ref{eq:2.1}), (\ref{eq:2.3}) satisfies the \emph{strong uniqueness property}, if for any viscosity sub- and supersolutions $u$, $w$ of (\ref{eq:2.1}), (\ref{eq:2.3}) we have $u\le w$ on $\overline{\mathcal O}$.

Consider the family $\mathcal L^a$ of "infinitesimal generators" of the diffusion process $X$:
$$ \mathcal L^a\varphi(x)=b(x,a)\varphi_x(x)+ \frac{1}{2}\Tr(\sigma(x,a)\sigma^T(x,a)\varphi_{xx}(x))$$
and the Hamilton-Jacobi-Bellman (HJB) equation, related to the problem (\ref{eq:1.1})-(\ref{eq:1.3}):
\begin{equation} \label{eq:2.4}
\beta v- H(x,v_x,v_y,v_{xx})=0,\quad (x,y)\in \Pi:=G\times (0,\infty),
\end{equation}
$$H(x,v_x,v_y,v_{xx})=\sup_{a \in [\underline a, \overline a]} \left\{f(x,a)+ \mathcal L^a v-|a|v_y \right\}.$$
The aim of this section is to prove that the value function (\ref{eq:1.3}) is the unique continuous viscosity solution of (\ref{eq:2.4}) with appropriate boundary conditions (see Theorem \ref{th:1}). This requires some preliminary work.

Denote by $C^2(G)$ the set of two times continuously differentiable functions on $G$, and by $C_b(\overline G)$ the set of bounded continuous functions on $\overline G$. Put $\widehat f(x)=f(x,0)$.

\begin{assumption} \label{as:1} There exists a solution $\psi\in C_b(\overline G)\cap C^2(G)$ of the Dirichlet problem
\begin{equation} \label{eq:2.5}
\beta\psi(x)-\widehat f(x)-\mathcal L^0\psi (x)=0,  \ x\in G; \quad \psi=0 \ \text{on}\ \partial G.
\end{equation}
\end{assumption}

It is straightforward to show that such solution is unique and admits the probabilistic representation
\begin{align*}
\psi(x) &=\mathsf E\int_0^{\widehat\theta^x}e^{-\beta t} \widehat f(\widehat X_t^x)\,dt,\qquad \widehat\theta^x=\inf\{t\ge: \widehat X_t\not\in G\},\\
d\widehat X_t^x &=b(\widehat X_t^x,0)\,dt+\sigma(\widehat X_t^x,0)\,dW_t,\quad \widehat X_t=x
\end{align*}
(see \cite{Fre85}, Chap. II, Theorem 2.1 and Remark 1 after it). Note, that $\psi$ is the value function of the problem without fuel.

One can see that (\ref{eq:1.1})-(\ref{eq:1.3}) combines the features of exit time and state constrained control problems. Fortunately, it is equivalent to a pure the exit time problem. Let $T^{x,y,\alpha}$ be the exit time of $(X^{x,y,\alpha}, Y^{x,y,\alpha})$ from the open set $G\times (0,\infty)$. For stopping times $\tau\le\sigma$ with values in $[0,\infty]$ we denote by $\llbracket\tau,\sigma\rrbracket$ the stochastic interval $\{(\omega,t)\in\Omega\times [0,\infty):\tau(\omega)\le t\le\sigma(\omega)\}$.
\begin{lemma} \label{lem:1}
Under Assumption \ref{as:1} the value function (\ref{eq:1.3}) admits the representation
\begin{equation} \label{eq:2.6}
 v(x,y)=\sup_{\alpha\in\mathcal U}\mathsf E\left(\int_0^{T^{x,y,\alpha}} e^{-\beta t} f(X_t^{x,y,\alpha})\,dt+e^{-\beta T^{x,y,\alpha}}\psi(X_{T^{x,y,\alpha}}^{x,y,\alpha})\right),
\end{equation}
where $\mathcal U$ is the set of all $\mathbb F$-progressively measurable strategies $\alpha$ with values in $A$.
\end{lemma}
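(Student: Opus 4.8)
The plan is to prove the two inequalities between $v(x,y)$ and the right-hand side of \eqref{eq:2.6}, which I denote by $\widetilde v(x,y)$. The key object is the fuel exhaustion time $\rho^{x,y,\alpha}=\inf\{t\ge 0: Y_t^{x,y,\alpha}=0\}$; since $Y$ is continuous and nonincreasing, the exit time of $(X,Y)$ from $G\times(0,\infty)$ is $T^{x,y,\alpha}=\theta^{x,y,\alpha}\wedge\rho^{x,y,\alpha}$. Two facts drive the argument. First, admissibility forces $\alpha_t=0$ for $t\ge\rho^{x,y,\alpha}$: once $Y=0$, the constraint $Y_t\ge 0$ together with $dY=-|\alpha|\,dt$ leaves no room for further fuel consumption. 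Second, on the event $\{\rho<\theta\}$ the process $X$ evolves after $\rho$ as the zero-control diffusion $\widehat X$ started at $X_\rho\in G$, and since $f(\cdot,0)=\widehat f$, the strong Markov property together with the probabilistic representation of $\psi$ stated after Assumption~\ref{as:1} gives
\[
\mathsf E\Big[\int_{\rho}^{\theta} e^{-\beta t}\widehat f(X_t)\,dt\,\Big|\,\mathscr F_\rho\Big]=e^{-\beta\rho}\psi(X_\rho).
\]

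For the inequality $v\le\widetilde v$, I would take an arbitrary $\alpha\in\mathcal A(x,y)$ and split the integral defining $J(x,y,\alpha)$ at $T=\theta\wedge\rho$. On $\{\theta\le\rho\}$ one has $T=\theta$ and $X_T\in\partial G$, whence $\psi(X_T)=0$ by the boundary condition in \eqref{eq:2.5}; on $\{\rho<\theta\}$ one has $T=\rho$, $\alpha=0$ on $[\rho,\theta)$ by the first fact above, and the tail integral is replaced by $e^{-\beta T}\psi(X_T)$ using the displayed identity. In both cases
\[
J(x,y,\alpha)=\mathsf E\Big(\int_0^{T^{x,y,\alpha}}e^{-\beta t}f(X_t^{x,y,\alpha},\alpha_t)\,dt+e^{-\beta T^{x,y,\alpha}}\psi(X_{T^{x,y,\alpha}}^{x,y,\alpha})\Big),
\]
and since $\mathcal A(x,y)\subset\mathcal U$ this is bounded by $\widetilde v(x,y)$; taking the supremum over admissible $\alpha$ gives the claim.

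For the reverse inequality $\widetilde v\le v$, I would associate to an arbitrary $\alpha\in\mathcal U$ the pasted control $\widehat\alpha_t=\alpha_t\mathbf 1_{\{t<T\}}$, where $T=T^{x,y,\alpha}$. Since $Y^{x,y,\alpha}>0$ on $[0,\rho)\supset[0,T)$ and $\widehat\alpha=0$ afterwards, the fuel process for $\widehat\alpha$ stays nonnegative, so $\widehat\alpha\in\mathcal A(x,y)$; moreover $X^{x,y,\widehat\alpha}=X^{x,y,\alpha}$ on $[0,T]$ by uniqueness for \eqref{eq:1.1}. Running exactly the same case split (boundary condition on $\{\theta\le\rho\}$, strong Markov plus the representation of $\psi$ on $\{\rho<\theta\}$, now applied to the genuinely zero-control trajectory of $\widehat\alpha$ after $T$) shows that $J(x,y,\widehat\alpha)$ equals the functional on the right of \eqref{eq:2.6} evaluated at $\alpha$. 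As $\widehat\alpha$ is admissible, this functional is at most $v(x,y)$, and taking the supremum over $\alpha\in\mathcal U$ yields $\widetilde v\le v$.

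The main obstacle is the rigorous strong Markov step producing the terminal term $e^{-\beta T}\psi(X_T)$: one must verify that after the (control-dependent) stopping time $\rho$ the dynamics of $X$ become autonomous, identify the post-$\rho$ trajectory with the diffusion $\widehat X$ restarted from $X_\rho$ via the shifted Wiener process $W_{\rho+\cdot}-W_\rho$, and then invoke the uniqueness characterization of $\psi$ from Assumption~\ref{as:1}. The remaining points are routine: progressive measurability of the pasted control $\widehat\alpha$, and the degenerate cases $\theta=\infty$ or $\rho=\infty$, where $e^{-\beta T}\psi(X_T)$ is read as its limit $0$ (finite because $\beta>0$ and $\psi$, $f$ are bounded) and the identities above continue to hold.
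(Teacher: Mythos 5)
Your proposal is correct and arrives at the same decomposition as the paper --- split $J$ at $T=\theta\wedge\rho$, show the tail contributes $e^{-\beta T}\psi(X_T)$, and use the boundary condition $\psi=0$ on $\partial G$ to absorb the case $T=\theta$ --- but the key technical step is carried out by a different mechanism. The paper never invokes the strong Markov property or the probabilistic representation of $\psi$: it applies Ito's formula to $e^{-\beta t}\psi(X_t)$ directly on the stochastic interval $\llbracket T^{x,y,\alpha},\theta^{x,y,\alpha}\llbracket$ (using that $\psi\in C^2(G)\cap C_b(\overline G)$ solves \eqref{eq:2.5} classically, so the drift term is exactly $-e^{-\beta s}\widehat f$), observes that the resulting local martingale is bounded and hence a uniformly integrable martingale vanishing at $T$, and then passes to the limit along a localizing sequence $\tau_n\nearrow\theta^{x,y,\alpha}$ using the boundary condition and dominated convergence. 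This sidesteps precisely the point you flag as your ``main obstacle'': identifying the post-$\rho$ trajectory with a restarted copy of $\widehat X$ and conditioning at a control-dependent stopping time. Your route is legitimate --- the restart argument is standard given pathwise uniqueness and the independence of $W_{\rho+\cdot}-W_\rho$ from $\mathscr F_\rho$, and the representation of $\psi$ is available from Assumption \ref{as:1} via the cited result of Freidlin --- but you leave that verification as a sketch, whereas the paper's Ito computation is self-contained. In the other direction you are more careful than the paper: the passage from $\sup_{\alpha\in\mathcal A(x,y)}$ to $\sup_{\alpha\in\mathcal U}$ is dispatched there in a single sentence, while your pasted control $\widehat\alpha_t=\alpha_t I_{\{t<T\}}$, together with the check that it is admissible and leaves the functional unchanged, makes that equivalence precise.
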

\begin{proof} Note, that
$T^{x,y,\alpha}=\theta^{x,y,\alpha}\wedge\inf\{t\ge 0:Y_t^{x,y,\alpha}=0\},$
and any admissible strategy $\alpha\in\mathcal A(x,y)$ should be switched to $0$ as far as "the fuel $Y^{x,y,\alpha}$ is exhausted": $\alpha_t=0$, $t\in (T^{x,y,\alpha},\theta^{x,y,\alpha}]$. Hence, the functional (\ref{eq:1.3}) can be represented as
\begin{align} \label{eq:2.7}
J(x,y,\alpha)&=\mathsf E\int_0^{T^{x,y,\alpha}} e^{-\beta t} f(X_t^{x,y,\alpha},\alpha_t)\,dt \nonumber\\
&+\mathsf E\left(I_{\{T^{x,y,\alpha}<\theta^{x,y,\alpha}\}}\mathsf E\left(\int_{T^{x,y,\alpha}}^{\theta^{x,y,\alpha}} e^{-\beta t} \widehat f(X_t^{x,y,\alpha}) \,dt\biggr|\mathscr F_{T^{x,y,\alpha}}\right)\right).
\end{align}

On the stochastic interval $\llbracket T^{x,y,\alpha}, \theta^{x,y,\alpha}\llbracket$ we have
$$ X^{x,y,\alpha}_t=\xi+\int_{T^{x,y,\alpha}}^t b(X^{x,y,\alpha}_s,0)\,ds+\int_{T^{x,y,\alpha}}^t \sigma(X^{x,y,\alpha}_s,0)\,dW_s,
\quad \xi=X^{x,y,\alpha}_{T^{x,y,\alpha}}.$$
For the solution $\psi$ of the Dirichet problem (\ref{eq:2.5}) by Ito's formula we get
\begin{align} \label{eq:2.8}
& e^{-\beta t}\psi(X^{x,y,\alpha}_t) =e^{-\beta T^{x,y,\alpha}}\psi(\xi)+\int_{T^{x,y,\alpha}}^t e^{-\beta s} (\mathcal L^0\psi-\beta\psi)(X^{x,y,\alpha}_s)\,ds+M_t \nonumber\\
 &=e^{-\beta T^{x,y,\alpha}}\psi(\xi)-\int_{T^{x,y,\alpha}}^t e^{-\beta s}\widehat f(X^{x,y,\alpha}_s)\,ds+M_t\quad\textrm{on } \llbracket T^{x,y,\alpha}, \theta^{x,y,\alpha}\llbracket,
\end{align}
where $M_t=\int^t_{T^{x,y,\alpha}} e^{-\beta s} \psi_x(X^{x,y,\alpha}_s) \cdot \sigma(X^{x,y,\alpha}_s,0) \, dW_s$ is a local martingale. The last equality shows, however, that $M$ is bounded. Hence, $M$ is a uniformly integrable continuous martingale with $M_{T^{x,y,\alpha}}=0$ on $\{T^{x,y,\alpha}<\theta^{x,y,\alpha}\}$. For any stopping time $\tau$, such that
\begin{equation} \label{eq:2.9}
T^{x,y,\alpha}\le\tau<\theta^{x,y,\alpha}\quad \text{on } \{T^{x,y,\alpha}<\theta^{x,y,\alpha}\},
\end{equation}
by taking the conditional expectation, from (\ref{eq:2.8}) we obtain
\begin{align} \label{eq:2.10}
& \mathsf E\left(e^{-\beta\tau}\psi(X^{x,y,\alpha}_\tau)I_{\{T^{x,y,\alpha}<\theta^{x,y,\alpha}\}}\Bigr|\mathscr F_{T^{x,y,\alpha}}\right)=e^{-\beta T^{x,y,\alpha}}\psi(\xi) I_{\{T^{x,y,\alpha}<\theta^{x,y,\alpha}\}} \nonumber\\
& -I_{\{T^{x,y,\alpha}<\theta^{x,y,\alpha}\}} \mathsf E\left(\int_{T^{x,y,\alpha}}^\tau e^{-\beta s}\widehat f(X^{x,y,\alpha}_s)\,ds\Bigr|\mathscr F_{T^{x,y,\alpha}}\right)
\end{align}

Consider an expanding sequence $G_n$ of compact sets such that $\cup_{n\ge 1} G_n=G$ and put
$$\tau_n=T^{x,y,\alpha}\vee \inf\{t\ge 0:X_t^{x,y,\alpha}\not \in G_n\}.$$
Clearly, $\tau_n\nearrow\theta^{x,y,\alpha}$ and  $\tau_n$ satisfy the condition (\ref{eq:2.9}) imposed on $\tau$. Furthermore,
$$ \lim_{n\to\infty} e^{-\beta\tau_n}\psi(X^{x,y,\alpha}_{\tau_n}) I_{\{T^{x,y,\alpha}<\theta^{x,y,\alpha}\}}=0\quad \text{a.s.}$$
by the boundary condition (\ref{eq:2.5}) and the boundedness of $\psi$.
By the inequality
$$\mathsf E\left|\mathsf E\left(e^{-\beta\tau_n}\psi(X^{x,y,\alpha}_{\tau_n}) I_{\{T^{x,y,\alpha}<\theta^{x,y,\alpha}\}}\Bigr|\mathscr F_{T^{x,y,\alpha}}\right) \right|\le \mathsf E\left| e^{-\beta\tau_n}\psi(X^{x,y,\alpha}_{\tau_n}) I_{\{T^{x,y,\alpha}<\theta^{x,y,\alpha}\}}\right|$$
and the dominated convergence theorem it follows that
$$ \mathsf E\left(e^{-\beta\tau_n}\psi(X^{x,y,\alpha}_{\tau_n}) I_{\{T^{x,y,\alpha}<\theta^{x,y,\alpha}\}}\Bigr|\mathscr F_{T^{x,y,\alpha}}\right)\to 0\quad \text{in } L^1.$$
Passing to a subsequence, one may assume that this sequence converges with probability $1$. Then from (\ref{eq:2.10}) we get
$$ I_{\{T^{x,y,\alpha}<\theta^{x,y,\alpha}\}}\mathsf E\left(\int_{T^{x,y,\alpha}}^{\theta^{x,y,\alpha}} e^{-\beta s}\widehat f(X_s^{x,y,\alpha})\,ds\biggr|\mathscr F_{T^{x,y,\alpha}}\right)=I_{\{T^{x,y,\alpha}<\theta^{x,y,\alpha}\}} e^{-\beta T^{x,y,\alpha}}\psi(\xi).$$

This completes the proof, since (\ref{eq:2.7}) takes the form
$$ J(x,y,\alpha)=\mathsf E\left(\int_0^{T^{x,y,\alpha}} e^{-\beta t} f(X_t^{x,y,\alpha},\alpha_t)\,dt+I_{\{T^{x,y,\alpha}<\theta^{x,y,\alpha}\}} e^{-\beta T^{x,y,\alpha}}\psi(X_{T^{x,y,\alpha}}^{x,y,\alpha})\right),$$
which is equivalent to the representation (\ref{eq:2.6}) in view of the boundary condition (\ref{eq:2.5}).
\end{proof}

Denote by $\overline v$ the value function of the problem with "infinite fuel":
\begin{equation} \label{eq:2.11}
\overline v(x)=\sup_{\alpha\in\mathcal U} \mathsf E\int_0^{\theta^{x,\alpha}} e^{-\beta t} f(X_t^{x,\alpha},\alpha_t)\,dt,
\qquad \theta^{x,\alpha}=\inf\{t\ge 0: X_t^{x,\alpha}\not \in G\},
\end{equation}
where $X^{x,\alpha}$ is the solution of (\ref{eq:1.1}). Consider the correspondent HJB equation and the boundary conditions:
\begin{equation} \label{eq:2.12}
\beta \overline v- \sup_{a \in [\underline a, \overline a]} \left\{f(x,a)+ \mathcal L^a \overline v \right\}=0,\quad x\in G,
\end{equation}
\begin{equation} \label{eq:2.13}
\beta \overline v- \sup_{a \in [\underline a, \overline a]} \left\{f(x,a)+ \mathcal L^a \overline v \right\}=0 \quad
\text{or}\quad \overline v=0\quad \text{on } \partial G.
\end{equation}

\begin{assumption} \label{as:2}
The boundary value problem (\ref{eq:2.12}), (\ref{eq:2.13}) satisfies the strong uniqueness property.
\end{assumption}

Let us mention a simple sufficient condition, ensuring the validity of Assumption \ref{as:2}. Suppose that $\partial G$ is of class $C^2$ and denote by $n(x)$ the unit outer normal to $\partial G$ at $x$. The Assumption \ref{as:2} holds true if the diffusion matrix does not degenerate along the normal direction to the boundary:
\begin{equation} \label{eq:2.13A}
\sigma(x,a) n(x) \neq 0, \quad (x,a) \in \partial G \times A.
\end{equation}
Indeed, let $u$, $w$ be bounded viscosity sub- and supersolution of (\ref{eq:2.12}), (\ref{eq:2.13}).  By Proposition 4.1 of \cite{BarRou98}, the generalized Dirichlet boundary condition (\ref{eq:2.13}) is satisfied in the usual sense: $u\le 0\le w$ on $\partial G$. Thus, we can apply the comparison result \cite[Theorem 7.3]{Ish89}, \cite[Theorem 4.2]{MotSar08a} (for not necessary bounded domain $G$) to get the inequality $u\le w$ on $\overline G$.

\begin{assumption} \label{as:3}
There exists a constant $K>0$ such that
$$ \sup_{x\in G}\left\{|f(x,a)-f(x,0)+\mathcal L^a\psi(x)-\mathcal L^0\psi(x)|\right\}\le K|a|.$$
\end{assumption}

This assumption is satisfied, e.g., if  $f$ is Lipshitz continuous, $\mathcal L^0$ is strictly elliptic and $\partial G$ is a bounded domain of H\"{o}lder class $C^{2,\alpha}$, $\alpha>0$. Indeed, by the classical Shauder theory we have $\psi\in C^{2,\alpha}(\overline G)$ (see \cite{GilTru01}, Theorem 6.14) and, in particular, the derivatives of $\psi$ up to second order are uniformly bounded in $G$. Note, that Assumption 1 also holds true in this case.

Define a continuous function $g$ on $\partial\Pi$ by the formulas
\begin{equation} \label{eq:2.14}
g(0,x)=\psi(x),\ \ x\in\overline G;\quad g(x,y)=0,\ x\in\partial G,\ y\ge 0.
\end{equation}
\begin{theorem} \label{th:1}
 Under Assumptions 1-3 the value function $v$ is a unique bounded viscosity solution of (\ref{eq:2.4}), which is continuous on $\overline\Pi$ and satisfies the boundary condition
\begin{equation} \label{eq:2.15}
v=g\quad \textnormal{on\ } \partial\Pi.
\end{equation}
\end{theorem}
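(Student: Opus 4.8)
The plan is to follow the stochastic Perron method of \cite{BaySir13}, in the exit-time form of \cite{Rok14}, applied to the pure exit-time problem (\ref{eq:2.6}) furnished by Lemma \ref{lem:1}; no dynamic programming principle is invoked. The logical skeleton is to introduce classes of stochastic sub- and supersolutions whose extremal functions sandwich $v$, to show by Perron bumping that the appropriate semicontinuous envelopes solve (\ref{eq:2.4}) in the viscosity sense, and to collapse the sandwich by a strong uniqueness result. Throughout, Assumption \ref{as:1} is what makes Lemma \ref{lem:1} available, so that the boundary datum $g$ of (\ref{eq:2.14}) is the genuine terminal reward.

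First I would record the a priori bounds that pin the boundary behaviour. The control $\alpha\equiv 0$ consumes no fuel, keeps $Y\equiv y>0$, forces $T^{x,y,\alpha}=\theta^{x,y,\alpha}$, and returns exactly $\psi(x)$ in (\ref{eq:2.6}); since finite fuel cannot outperform infinite fuel, $\psi(x)\le v(x,y)\le\overline v(x)$ on $\Pi$. Under Assumption \ref{as:2}, $\overline v$ is the unique continuous viscosity solution of (\ref{eq:2.12}), (\ref{eq:2.13}) and, by the \cite{BarRou98} reduction invoked after Assumption \ref{as:2}, satisfies $\overline v=0$ on $\partial G$ in the usual sense; as $\psi=0$ on $\partial G$ as well, the sandwich squeezes $v$ to $0$ along the lateral face $\partial G\times[0,\infty)$. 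For the bottom face I would use Assumption \ref{as:3}: repeating the It\^o computation of Lemma \ref{lem:1} to replace $e^{-\beta T}\psi(X_T)$ gives
$$ v(x,y)-\psi(x)=\sup_{\alpha\in\mathcal U}\mathsf E\int_0^{T^{x,y,\alpha}} e^{-\beta t}\Bigl(f(X_t,\alpha_t)-\widehat f(X_t)+\mathcal L^{\alpha_t}\psi(X_t)-\mathcal L^0\psi(X_t)\Bigr)\,dt, $$
whence, using $\int_0^{T}|\alpha_t|\,dt=y-Y_T\le y$, Assumption \ref{as:3} yields $0\le v(x,y)-\psi(x)\le Ky$. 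This gives $v(x,0{+})=\psi(x)=g(x,0)$ and Lipschitz continuity of $v$ in the fuel variable, so that (\ref{eq:2.14}) is attained and $v$ is continuous up to $\{y=0\}$.

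Next I would set up the Perron machinery on the cylinder $\Pi$. A bounded lsc function $w\ge g$ on $\partial\Pi$ is a stochastic supersolution if for every $\alpha\in\mathcal U$ the process $e^{-\beta(t\wedge T)}w(X_{t\wedge T},Y_{t\wedge T})+\int_0^{t\wedge T}e^{-\beta s}f(X_s,\alpha_s)\,ds$ is a supermartingale; stochastic subsolutions are defined symmetrically, as bounded usc functions $w\le g$ on $\partial\Pi$ whose associated submartingale property is attainable by a suitable control. Boundedness of $f$ and $\psi$ makes both classes nonempty, with $\overline v$ serving as a supersolution and $\psi$ as a subsolution. Writing $v^+=\inf\mathcal V^+$ and $v^-=\sup\mathcal V^-$, the defining martingale inequalities give the sandwich $v^-\le v\le v^+$ on $\Pi$, and the central Perron lemmas assert that $(v^+)^*$ is a viscosity subsolution and $(v^-)_*$ a viscosity supersolution of (\ref{eq:2.4}), each carrying the generalized Dirichlet condition (\ref{eq:2.3}) on $\partial\Pi$. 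As usual these are proved by contradiction: were an interior inequality to fail, one pastes in a locally bumped member of the class to produce a strictly smaller supersolution (resp. larger subsolution), contradicting the infimum (resp. supremum); the fuel coordinate enters only through the harmless first-order term $-|a|v_y$, so the local solvability and the Lipschitz structure of $H$ are what is used.

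Finally I would close the loop. Combining the a priori bounds $\psi\le v^\pm\le\overline v$ with the $y$-Lipschitz estimate upgrades the generalized boundary condition on $(v^+)^*$, $(v^-)_*$ to the genuine Dirichlet condition (\ref{eq:2.15}), i.e. $(v^+)^*\le g\le(v^-)_*$ on $\partial\Pi$ in the usual sense. A strong uniqueness (comparison) theorem for (\ref{eq:2.4}), (\ref{eq:2.15}) then gives $(v^+)^*\le(v^-)_*$; since trivially $(v^-)_*\le v^-\le v\le v^+\le(v^+)^*$, all these functions coincide, so $v=v^-=v^+$ is continuous on $\overline\Pi$, solves (\ref{eq:2.4}), (\ref{eq:2.15}), and any competing bounded continuous solution is caught between the same barriers, giving uniqueness. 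I expect this comparison step to be the main obstacle: one must run a doubling-of-variables argument in $(x,y)$ on the \emph{unbounded} degenerate cylinder $\Pi$ (the equation being genuinely first order in $y$), transferring the well-posedness on $\partial G$ supplied by Assumption \ref{as:2} and the non-degeneracy (\ref{eq:2.13A}) to the full problem, while the bottom face is controlled through the Lipschitz estimate of Assumption \ref{as:3}; this is where the cited comparison results \cite[Theorem 7.3]{Ish89}, \cite[Theorem 4.2]{MotSar08a} must be adapted to accommodate the extra transport variable.
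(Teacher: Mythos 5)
Your overall strategy is the one the paper actually follows: Lemma \ref{lem:1} to pass to a pure exit-time problem with terminal reward $g$, the stochastic Perron method of \cite{Rok14} to produce an usc viscosity subsolution (the infimum of stochastic supersolutions) and an lsc viscosity supersolution (the supremum of stochastic subsolutions) sandwiching $v$, explicit barriers built from $\psi$ and $\overline v$ to pin down the boundary values, and a standard comparison theorem to collapse the sandwich. Your probabilistic derivation of $0\le v(x,y)-\psi(x)\le Ky$ from Assumption \ref{as:3} together with the fuel budget $\int_0^{T}|\alpha_t|\,dt\le y$ is correct and is essentially the same It\^o computation the paper performs.

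There is, however, one genuine gap: that estimate controls $v$, not the Perron upper envelope. In the Perron scheme you only know $v\le v^+:=\inf\mathcal V^+$, so the bound $v\le\psi+Ky$ says nothing about $v^+$ near $\{y=0\}$, and it is $v^+$ (and its usc envelope) that must be shown to satisfy $v^+\le g$ on the bottom face before comparison can close the argument. The generalized boundary condition does not rescue you there: the equation degenerates at $y=0$ exactly in the way the paper warns about (the alternative ``$F\le 0$'' cannot be excluded on $G\times\{0\}$, since $v_y$ enters $H$ only through $-|a|v_y$ and $a=0$ is admissible), which is precisely why the strong comparison results of \cite{BarRou98,Cha04} do not apply directly. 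The repair is your own computation run once more at the level of the supersolution class: show that $w(x,y)=\psi(x)+cy$ with $c\ge K$ is a stochastic supersolution (the paper's Lemma \ref{lem:2}(ii)), whence $v^+\le\psi+cy$ and the two envelopes agree with $g$ on all of $\partial\Pi$ in the classical sense. Once that is done, your anticipated ``main obstacle'' largely dissolves: the extra transport variable contributes only a term Lipschitz in $v_y$ uniformly in $a$, which is harmless in the standard doubling argument, and the cited comparison results \cite[Theorem 7.3]{Ish89}, \cite[Theorem 4.2]{MotSar08a} apply as stated to ordered sub/supersolutions with classically attained boundary data. Note also that the nondegeneracy condition (\ref{eq:2.13A}) is only a sufficient condition for Assumption \ref{as:2} and should not be invoked in the proof of the theorem itself.
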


A specific feature of the equation (\ref{eq:2.4}) is the form at which it degenerates at the boundary points $(x,0)$. Such degeneracy does not allow to apply directly the strong comparison result of \cite[Theoorem 2.1]{BarRou98}, \cite[Theoorem 2.1]{Cha04}. It is possible to apply the results of \cite{MotSar08a} after some additional work. We, however, pursue another away,
utilizing the stochastic Perron method, developed in \cite{BaySir13}. Using the result of \cite{Rok14}, this allows to give a short direct proof of Theorem \ref{th:1} without relying on the dynamic programming principle.

Let $\tau$ be a stopping time, and let $(\xi,\eta)$ be a bounded $\mathscr F_\tau$-measurable random vector with values in $\overline\Pi$. Consider the SDE (\ref{eq:1.1}) with the randomized initial condition $(\tau,\xi,\eta)$:
\begin{align}
X_t &=\xi I_{\{t\ge\tau\}}+\int_\tau^t b(X_s,\alpha_s)\,ds+\int_\tau^t \sigma(X_s,\alpha_s)\,dW_s, \label{eq:2.16}\\
Y_t &=\eta I_{\{t\ge\tau\}}-\int_\tau^t|\alpha_s|\,ds. \label{eq:2.17}
\end{align}
As is known, see \cite{Kry80} (Chap. 2, Sect. 5), there exists a pathwise unique strong solution  $(X^{\tau,\xi,\eta,\alpha},Y^{\tau,\xi,\eta,\alpha})$ of (\ref{eq:2.16}), (\ref{eq:2.17}) for any $\alpha\in\mathcal U$. To reconcile this notation with the previous one we drop the index $\tau$ for $\tau=0$: $X^{0,x,y,\alpha}=X^{x,y,\alpha}$ for instance.

For a continuous function $u$ on $\overline\Pi$ define the process
$$ Z^{\tau,\xi,\eta,\alpha}_t(u)=\int_\tau^t e^{-\beta s} f(X^{\tau,\xi,\eta,\alpha}_s,\alpha_s)\,ds +e^{-\beta t} u(X^{\tau,\xi,\eta,\alpha}_t,Y^{\tau,\xi,\eta,\alpha}_t).$$

\begin{definition} \label{def:1} A function $u\in C(\overline\Pi)$, such that $u(\cdot,y)$ is bounded, is called a stochastic subsolution of (\ref{eq:2.4}), (\ref{eq:2.15}), if $u\le g$ on $\partial\Pi$ and for any randomized initial condition $(\tau,\xi,\eta)$ there exists $\alpha\in\mathcal U$ such that
$$ \mathsf E(Z^{\tau,\xi,\eta,\alpha}_\rho(u)|\mathscr F_\tau)\ge Z^{\tau,\xi,\eta,\alpha}_\tau(u)=e^{-\beta\tau} u(\xi,\eta)$$
for any stopping time $\rho\in [\tau,T^{\tau,\xi,\eta,\alpha}]$.
\end{definition}

\begin{definition} \label{def:2} A function $w\in C(\overline\Pi)$, such that $w(\cdot,y)$ is bounded, is called a stochastic supersolution of (\ref{eq:2.4}), (\ref{eq:2.15}), if $w\ge g$ on $\partial\Pi$ and
$$ \mathsf E(Z^{\tau,\xi,\eta,\alpha}_\rho(w)|\mathscr F_\tau)\le Z^{\tau,\xi,\eta,\alpha}_\tau(w)=e^{-\beta\tau} w(\xi,\eta)$$
for any randomized initial condition $(\tau,\xi,\eta)$, control process $\alpha\in\mathcal U$ and stopping time $\rho\in [\tau,T^{\tau,\xi,\eta,\alpha}]$.
\end{definition}

In this form the notions of stochastic semisolutions are tailor-made for exit time problems (see \cite{Rok14}). In the present context we need not assume that $u$, $w$ are bounded in $y$, since for any bounded initial condition $(\xi,\eta)$ the process $Y^{\tau,\xi,\eta,\alpha}$ remains bounded up to the exit time $T^{\tau,\xi,\eta,\alpha}$ from $\Pi$. Note that $(X^{\tau,\xi,\eta,\alpha}_\infty,Y^{\tau,\xi,\eta,\alpha}_\infty)$ may be defined arbitrary.

Denote the sets of stochastic sub- and supersolutions by $\mathcal V_-$ and $\mathcal V_+$ respectively. Any stochastic subsolution $u$ bounds the value function $v$ from below and any stochastic supersolution $w$ bounds $v$ from above:
\begin{equation} \label{eq:2.18}
u_-:=\sup_{u \in \mathcal V^-} u\le v\le  w_+:= \inf_{w \in \mathcal V^+} w\quad \textnormal{on } \overline\Pi.
\end{equation}

To see this put $\tau=0, \xi=x, \eta=y$ and $\rho=T^{x,y,\alpha}$ in Definition \ref{def:1}:
\begin{align*}
 Z_0^{x,y,\alpha}(u) =u(x,y) &\le\mathsf E\int_0^{T^{x,y,\alpha}} e^{-\beta s} f(X^{x,y,\alpha}_s,\alpha_s)\,ds \\
 &+ \mathsf E\left(e^{-\beta T^{x,y,\alpha}} u(X^{x,y,\alpha}_{T^{x,y,\alpha}},Y^{x,y,\alpha}_{T^{x,y,\alpha}})\right)\le v(x,y).
\end{align*}
The last inequality follows from the representation (\ref{eq:2.6}) since
$$e^{-\beta T^{x,y,\alpha}} u(X^{x,y,\alpha}_{T^{x,y,\alpha}},Y^{x,y,\alpha}_{T^{x,y,\alpha}})\le e^{-\beta T^{x,y,\alpha}}g(X^{x,y,\alpha}_{T^{x,y,\alpha}},Y^{x,y,\alpha}_{T^{x,y,\alpha}})=e^{-\beta T^{x,y,\alpha}}\psi(X^{x,y,\alpha}_{T^{x,y,\alpha}}).$$

Similarly, by Definition \ref{def:2} we have
\begin{align*}
Z_0^{x,y,\alpha}(w)=w(x,y) &\ge \mathsf E\int_0^{T^{x,y,\alpha}} e^{-\beta s} f(X^{x,y,\alpha}_s,\alpha_s)\,ds\\
 &+\mathsf E\left(e^{-\beta T^{x,y,\alpha}} w(X^{x,y,\alpha}_{T^{x,y,\alpha}},Y^{x,y,\alpha}_{T^{x,y,\alpha}})\right)
\end{align*}
for all $\alpha\in\mathcal U$. Thus $w(x,y)\ge v(x,y)$, since
$$e^{-\beta T^{x,y,\alpha}} w(X^{x,y,\alpha}_{T^{x,y,\alpha}},Y^{x,y,\alpha}_{T^{x,y,\alpha}})\ge e^{-\beta T^{x,y,\alpha}}g(X^{x,y,\alpha}_{T^{x,y,\alpha}},Y^{x,y,\alpha}_{T^{x,y,\alpha}})=e^{-\beta T^{x,y,\alpha}}\psi(X^{x,y,\alpha}_{T^{x,y,\alpha}}).$$

\begin{lemma} \label{lem:2}
Put $u(x,y)=\psi(x)$, $w(x,y)=\psi(x)+cy$. Under Assumptions \ref{as:1}, \ref{as:3} we have $u\in\mathcal V_-$, $w\in\mathcal V_+$ for $c>0$ large enough. Moreover, $\overline v$, defined by (\ref{eq:2.11}), is a stochastic supersolution under Assumption 2.
\end{lemma}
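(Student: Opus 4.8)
The plan is to verify the three claims separately; in each case I would reduce the defining (super/sub)martingale inequality of Definitions \ref{def:1}--\ref{def:2} to an It\^o computation that exploits the Dirichlet problem (\ref{eq:2.5}) and Assumption \ref{as:3}, followed by the same compact-exhaustion localization used in the proof of Lemma \ref{lem:1}.

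For $u(x,y)=\psi(x)$ I would take the constant control $\alpha\equiv 0$. Then $Y_t\equiv\eta$ stays frozen, $X$ solves the uncontrolled equation, and $T^{\tau,\xi,\eta,0}$ coincides with the exit time of $X$ from $G$. Applying It\^o's formula to $e^{-\beta t}\psi(X_t)$ and substituting $\mathcal L^0\psi-\beta\psi=-\widehat f$ from (\ref{eq:2.5}) (exactly as in (\ref{eq:2.8})), the finite-variation part of $Z^{\tau,\xi,\eta,0}(u)$ cancels, so $Z^{\tau,\xi,\eta,0}(u)$ is a local martingale. Localizing along the compacts $G_n$ and using the boundedness of $\psi$ and $\widehat f$, I upgrade this to the identity $\mathsf E(Z_\rho(u)\mid\mathscr F_\tau)=Z_\tau(u)$, which in particular gives the required inequality $\ge$. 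Since $\psi=0$ on $\partial G$ and $g(0,x)=\psi(x)$, one has $u=g\le g$ on $\partial\Pi$; hence $u\in\mathcal V_-$.

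For $w(x,y)=\psi(x)+cy$ the inequality must hold for \emph{every} $\alpha\in\mathcal U$. Applying It\^o to $e^{-\beta t}(\psi(X_t)+cY_t)$ and again eliminating $\beta\psi-\mathcal L^0\psi$ via (\ref{eq:2.5}), the drift rate of $Z^{\tau,\xi,\eta,\alpha}(w)$ equals
\[ \bigl(f(X_t,\alpha_t)-f(X_t,0)\bigr)+\bigl(\mathcal L^{\alpha_t}\psi(X_t)-\mathcal L^0\psi(X_t)\bigr)-c|\alpha_t|-\beta c\,Y_t. \]
By Assumption \ref{as:3} the first two terms together are bounded by $K|\alpha_t|$, while $Y_t\ge 0$ up to $T^{\tau,\xi,\eta,\alpha}$; choosing $c\ge K$ makes the whole drift at most $(K-c)|\alpha_t|-\beta c\,Y_t\le 0$. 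Thus $Z^{\tau,\xi,\eta,\alpha}(w)$ is a local supermartingale, and the $G_n$-localization (now using $0\le Y_t\le\eta$ and the boundedness of $\psi$ and $f$) yields $\mathsf E(Z_\rho(w)\mid\mathscr F_\tau)\le Z_\tau(w)$. Since $w(x,0)=\psi(x)=g(x,0)$ and $w(x,y)=cy\ge 0=g(x,y)$ for $x\in\partial G$, we get $w\ge g$ and $w\in\mathcal V_+$.

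For $\overline v$ I would regard it as a function on $\overline\Pi$ independent of $y$; it is bounded by $\|f\|_\infty/\beta$. The boundary inequality $\overline v\ge g$ splits into $\overline v\ge\psi$ on $\{y=0\}$---which holds because $\alpha\equiv 0$ is admissible in (\ref{eq:2.11}), so $\overline v(x)\ge\mathsf E\int_0^{\widehat\theta^x}e^{-\beta t}\widehat f(\widehat X_t^x)\,dt=\psi(x)$ by the probabilistic representation of $\psi$---and $\overline v\ge 0$ on the lateral boundary, which follows from the boundary condition in (\ref{eq:2.13}) holding in the usual sense under Assumption \ref{as:2}. The remaining and \textbf{main obstacle} is the supermartingale property: since $\overline v$ depends only on $x$ and $T^{\tau,\xi,\eta,\alpha}\le\theta^{x,\alpha}$, it suffices to show that $e^{-\beta t}\overline v(X_t)+\int_\tau^t e^{-\beta s}f(X_s,\alpha_s)\,ds$ is a supermartingale along every controlled flow up to the exit from $G$. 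This is the pathwise counterpart of $\overline v$ being a viscosity \emph{super}solution of (\ref{eq:2.12}); I would establish it by smoothing $\overline v$ (sup/inf-convolution) so that the HJB supersolution inequality can be integrated along $X$ on each compact $G_n$, and then removing the regularization and the localization using the boundedness and the continuity of $\overline v$ guaranteed by Assumption \ref{as:2}. The delicate point is carrying out this passage for a merely continuous viscosity supersolution rather than a classical one.
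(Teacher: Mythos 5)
Your treatment of $u(x,y)=\psi(x)$ and $w(x,y)=\psi(x)+cy$ is essentially the paper's own argument: the constant control $\alpha\equiv 0$ and the cancellation via (\ref{eq:2.5}) for the subsolution, and the drift estimate from Assumption \ref{as:3} with $c\ge K$ for the supersolution, followed by localization along the compacts $G_n$ and extension to $\rho\le T^{\tau,\xi,\eta,\alpha}$ by continuity. These two parts are correct. Your boundary inequalities for $\overline v$ ($\overline v\ge\psi$ because $\alpha\equiv 0$ is admissible in (\ref{eq:2.11}), hence $\overline v\ge g$ on all of $\partial\Pi$ since $\psi=0$ on $\partial G$) also match the paper.

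The genuine gap is in the supermartingale property of $Z^{\tau,\xi,\eta,\alpha}(\overline v)$, which you correctly flag as the main obstacle but do not resolve. Passing from ``$\overline v$ is a continuous viscosity supersolution of (\ref{eq:2.12})'' to ``$e^{-\beta t}\overline v(X_t)+\int_\tau^t e^{-\beta s}f(X_s,\alpha_s)\,ds$ is a supermartingale for every control'' by sup/inf-convolution is not a routine step: the inf-convolution is only semiconcave (twice differentiable a.e.), it is a supersolution of an equation with perturbed coefficients, and integrating an a.e.\ pointwise differential inequality along a possibly degenerate controlled diffusion requires Krylov-type estimates or a nondegeneracy assumption that is not available here. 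In other words, your plan amounts to proving a verification theorem for nonsmooth supersolutions, which is precisely the difficulty the stochastic Perron method is designed to avoid. The paper's route is different and sidesteps this entirely: by the construction in \cite{Rok14} (invoked via its Theorem 1 and Remark 1, under Assumption \ref{as:2}), $\overline v$ is the pointwise decreasing limit of a sequence $\overline w_n$ of \emph{stochastic} supersolutions of the infinite-fuel problem, and the defining inequality of Definition \ref{def:2} is stable under decreasing limits with a continuous limit function; since $\overline v$ does not depend on $y$ and $\overline v\ge g$ on $\partial\Pi$, it is then a stochastic supersolution of (\ref{eq:2.4}), (\ref{eq:2.15}). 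You should either carry out the regularization argument in full (nontrivial, and likely requiring extra hypotheses) or replace it by the limiting argument from the stochastic Perron construction.
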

\begin{proof}
(i) Similar to (\ref{eq:2.8}), by Ito's formula and the definition of $\psi$ we get
\begin{equation} \label{eq:2.19}
Z_t^{\tau,\xi,\eta,0}(u)=\int^t_\tau e^{-\beta s} f(X_s^{\tau,\xi,\eta,0},0)\,ds+e^{-\beta t}\psi(X_t^{\tau,\xi,\eta,0})
=e^{-\beta\tau}\psi(\xi)+M_t
\end{equation}
on $\llbracket\tau,T^{\tau,\xi,\eta,0}\llbracket$, where $M_t=\int_\tau^t e^{-\beta s} \psi_x(X^{\tau,\xi,\eta,0})\cdot \sigma(X^{\tau,\xi,\eta,0}_s,0) \, dW_s$ is a local martingale. Since $M_\tau=0$ on $\{\tau<T^{\tau,\xi,\eta,0}\}$, and $M$ is bounded, as follows from (\ref{eq:2.19}), we have
$$ \mathsf E(Z_\rho^{\tau,\xi,\eta,0}(u)|\mathscr F_\tau) I_{\{\tau<T^{\tau,\xi,\eta,0}\}}=e^{-\beta\tau}\psi(\xi) I_{\{\tau<T^{\tau,\xi,\eta,0}\}}$$
for a stopping time $\rho$, satisfying the inequality
\begin{equation} \label{eq:2.20}
\tau\le\rho<T^{\tau,\xi,\eta,0}\quad \text{on }\{\tau<T^{\tau,\xi,\eta,0}\}.
\end{equation}
Moreover, since any $\mathbb F$-stopping time is predictable (see \cite[Proposition 16.22]{Bass11}), we may extend (\ref{eq:2.20}) to a stopping time $\rho\le T^{\tau,\xi,\eta,0}$ by the continuity argument.

It follows that $u$ is a stochastic subsolution:
\begin{equation} \label{eq:2.21}
\mathsf E(Z_\rho^{\tau,\xi,\eta,0}(u)|\mathscr F_\tau)=e^{-\beta\tau}\psi(\xi)=Z_\tau^{\tau,\xi,\eta,0}(u),\quad
\rho\in [\tau,T^{\tau,\xi,\eta,0}],
\end{equation}
since on $\{\tau=T^{\tau,\xi,\eta,0}\}$ this equality is trivially satisfied. Note, that the control process $\alpha=0$, ensuring (\ref{eq:2.21}), does not depend on $(\tau,\xi,\eta)$, although such dependence is allowed by Definition \ref{def:1}.

(ii) To prove that $w=\psi(x)+cy$ is a stochastic supersolution of (\ref{eq:2.1}), (\ref{eq:2.2}), we also apply Ito's formula:
\begin{align*}
Z^{\tau,\xi,\eta,\alpha}_t(w) &=e^{-\beta \tau}w(\xi,\eta)+N_t+M_t \quad \text{on } \llbracket\tau,T^{\tau,\xi,\eta,0}\llbracket,\\
N_t &=\int^t_\tau e^{-\beta s} f(X^{\tau,\xi,\eta,\alpha}_s,\alpha)\,ds\\ &+\int^t_\tau e^{-\beta s}\left(\mathcal L^\alpha \psi(X^{\tau,\xi,\eta,\alpha}_s) - \beta \psi(X^{\tau,\xi,\eta,\alpha}_s) -|\alpha|c - \beta c Y^{\tau,\xi,\eta,\alpha}_s\right)\,ds, \\
M_t &=\int^t_\tau e^{-\beta s} \psi_x(X^{\tau,\xi,\eta,\alpha}_s) \cdot \sigma(X^{\tau,\xi,\eta,\alpha}_s,\alpha) \, dW_s.
\end{align*}
By Assumption \ref{as:3} there exists a constant $K>0$ such that
$$|f(x,a)+\mathcal L^a\psi(x)-\beta\psi(x)| =|f(x,a)-f(x,0)+\mathcal L^a\psi(x)-\mathcal L^0\psi(x)|\le K|a|.$$
Hence,
\begin{align*}
 |N_t| &\le\int^t_\tau e^{-\beta t} (K|\alpha_s|+c|\alpha_s|+\beta c \eta)\,ds\le K',\\
  N_t  &\le\int^t_\tau e^{-\beta t} (K-c)|\alpha_s|\,ds\le 0\quad \text{for } c\ge K.
\end{align*}
It follows that the local martingale $M$ is uniformly bounded on $\llbracket\tau,T^{\tau,\xi,\eta,0}\llbracket$ and
$$ \mathsf E(Z^{\tau,\xi,\eta,\alpha}_\rho(w)|\mathscr F_\tau)I_{\{\tau<T^{\tau,\xi,\eta,\alpha}\}}\le e^{-\beta \tau}w(\xi,\eta)I_{\{\tau<T^{\tau,\xi,\eta,\alpha}\}}$$
for a stopping time $\rho$, satisfying (\ref{eq:2.20}). As in the proof of part (i), one can extend this inequality to a stopping time $\rho\le T^{\tau,\xi,\eta,\alpha}$ to obtain
$$\mathsf E(Z^{\tau,\xi,\eta,\alpha}_\rho(w)|\mathscr F_\tau)\le e^{-\beta \tau}w(\xi,\eta),$$
which means that $w$ is a stochastic supersolution.

(iii) By \cite{Rok14} (see Theorem 1 and Remark 1)  $\overline v\in C(\overline G)$ is the unique bounded viscosity solution of (\ref{eq:2.12}), (\ref{eq:2.13}), and it satisfies the boundary condition $\overline v=0$ on $\partial\overline G$. The argumentation of \cite{Rok14} shows that there exist a decreasing sequence of stochastic supersolutions $\overline w_n$ of (\ref{eq:2.12}), (\ref{eq:2.13}), converging to $\overline v$. From Definition \ref{def:2} it follows that $\overline v$ is a stochastic supersolution of (\ref{eq:2.12}), (\ref{eq:2.13}). Since $\overline v$ does not depend on $y$ and $\overline v\ge\psi$ (and thus $\overline v\ge g$ on $\partial\Pi$), from the same definition it follows that $\overline v$ a stochastic supersolution of (\ref{eq:2.4}), (\ref{eq:2.15}).
\end{proof}

\begin{proof}[Proof of Theorem \ref{th:1}]
By Lemma \ref{lem:2} there exist a pair $u$, $w$ of stochastic sub- and supersolutions such that
$$ u=\psi=w\quad \text{on } G\times\{0\},$$
and there exist another such pair $u$, $\overline v$ such that
$$ u=0=\overline v\quad \text{on } \partial G\times [0,\infty).$$
By the definition (\ref{eq:2.18}) of $u_-$, $w_+$ it follows that
\begin{equation} \label{eq:2.22}
u_- =w_+\quad \text{on } \partial\Pi.
\end{equation}

Furthermore, it was proved in \cite{Rok14} (Theorems 2, 3) that $u_-$ (resp., $w_+$) is a viscosity supersolution (resp., viscosity subsolution) of (\ref{eq:2.4}). By the comparison result (see \cite[Theorem 7.3]{Ish89}, \cite[Theorem 4.2]{MotSar08a}, \cite[Theorem 6.21]{Tou13} for the case of unbounded domain) and (\ref{eq:2.22}) we get the inequality $u_- \ge w_+$ on $\Pi$. Combining this inequality with (\ref{eq:2.18}), we conclude that
$$u_-=v=w_+ \quad \text{on} \quad \overline{\Pi}.$$

Hence, $v$ is a continuous on $\overline G$ viscosity solution of (\ref{eq:2.4}), which satisfies the boundary condition (\ref{eq:2.15}) in the usual sense. The uniqueness of a continuous viscosity solution follows from the same comparison result.
\end{proof}

\begin{remark}
Theorem \ref{th:1} can be adapted to the case of finite horizon $\widetilde T$. In this case one should consider the extended controlled process $\widetilde X=(t,X)$ and the domain $\widetilde G=(0,\widetilde T)\times G$ instead of $X$ and $G$. The HJB equation $\widetilde G\times (0,\infty)$ is analysed along the same lines. The correspondent parabolic equations (\ref{eq:2.5}), (\ref{eq:2.12}) can be considered as degenerate elliptic: see, e.g., \cite{Kry96} for linear case, and \cite[Corollary 3.1]{Cha04} for the strong comparison result in the nonlinear case. The latter is needed to make Assumption 2 constructive.
\end{remark}

\begin{remark} The Dirichlet condition $v(x,0)=\psi(x)$, $x\in \overline G$, where $\psi$ is the value function of the uncontrolled problem without fuel, is quite natural and is commonly used in finite fuel control problems (see, e.g., \cite[Sect. VIII.6]{FleSon06}). However, some authors use another condition, which is typical for state constrained problems: see \cite{PemZhaYin07,MotSar07}.
\end{remark}

\begin{remark} The idea of utilization of stochastic semisolutions, satisfying the sought-for boundary conditions (see Lemma \ref{lem:2}), in order to reduce the proof of Theorem \ref{th:1} to a standard comparison result, is borrowed from \cite{BayZha05}.
\end{remark}

\section{Optimal correction of a stochastic system} \label{sec:3}
\setcounter{equation}{0}
Consider a simple one-dimensional ($d=1$) controlled stochastic system
 \begin{align*}
dX &=-kXdt+\sigma dW_t-\alpha_t dt, \\
dY &=-|\alpha_t|dt,
\end{align*}
where $\sigma>0$, $k$ are some constants and $\alpha_t\in [\underline a,\overline a]$. The case $k>0$ (resp., $k<0$) corresponds to the stable (resp., unstable) equilibrium point $0$. An infinitesimal increment $dX$ of the system can be corrected with intensity $\alpha$.
Controller's aim is to keep the system in the interval $G=(-l,l)$, $l>0$ as long as possible. More precisely we consider the risk-sensitive criterion of the form (\ref{eq:1.4}). By Lemma \ref{lem:1} we pass to the exit time problem
$$ v(x,y)=\sup_{\alpha\in\mathcal U}\mathsf E\left(\int_0^{T^{x,y,\alpha}} e^{-\beta t}\,dt+e^{-\beta T^{x,y,\alpha}}\psi(X_{T^{x,y,\alpha}}^{x,y,\alpha})\right),$$
where $\psi$ is the solution of the Dirichlet problem for the ordinary differential equation:
\begin{align*}
&\beta \psi -1 +kx\psi_x-\frac{1}{2}\sigma^2 \psi_{xx}=0, \quad x \in (-l,l)\\
& \psi(-l)=\psi(l)=0.
\end{align*}

Clearly, Assumptions 1-3 holds true. By Theorem \ref{th:1} $v$ is the unique continuous bounded viscosity solution of the equation
\begin{equation} \label{eq:3.1}
\beta v -1-\frac{1}{2}\sigma^2 v_{xx}+\min_{a \in [\underline a,\overline{a}]}\{(kx+a)v_x+|a|v_y\}=0,\quad (x,y)\in (-l,l)\times(0,\infty)
\end{equation}
which satisfies the boundary conditions:
\begin{equation} \label{eq:3.2}
v(x,0)=\psi(x),\ x\in [-l,l];\quad v(-l,y)=v(l,y)=0,\  y>0.
\end{equation}

To solve the problem (\ref{eq:3.1}), (\ref{eq:3.2}) numerically we consider the rectangular grid
$$\overline G_h=\{x_{ij}=(ih_1,jh_2): -I \le i \le I, \ 0 \le j \le J\}, \ I h_1=l, \ Jh_2=\overline y,$$
where $I,J,i,j$ are integers, $h=(h_1,h_2)$ are the grid steps, and $\overline y$ corresponds to the artificial boundary.
Put $G_h=\{x_{ij}:-I<i<I, 0<j\le J\}$ and denote by $\partial G_h=\overline G_h\backslash G_h$ the "parabolic boundary" of the
grid. Consider the system of equations
\begin{align}
& \beta v_{i,j} -1-\frac{1}{2}\sigma^2 \frac{v_{i+1,j}-2v_{i,j}+v_{i-1,j}}{h^2_1}+\min_{a \in [\underline a,\overline a]} \left\{ (k x_{i,j}+a)^+\frac{v_{i,j}-v_{i-1,j}}{h_1}\right.\nonumber\\
 &-(k x_{i,j}+a)^-\frac{v_{i+1,j}-v_{i,j}}{h_1} \left.+|a|\frac{v_{i,j}-v_{i,j-1}}{h_2} \right\}=0,\quad x_{ij}\in G_h;\label{eq:3.3}\\
& v_{ij}-g(x_{ij})=0,\quad x_{ij}\in \partial G_h \label{eq:3.4}
\end{align}
for the mesh function $v_{ij}=v_h(x_{ij})$. The function $g$ is defined by (\ref{eq:2.14}).
Equations (\ref{eq:3.3}), (\ref{eq:3.4}) can be represented in the form
\begin{equation} \label{eq:3.5}
F_h(x_{ij},v_{ij},(v_{ij}-v_{i'j'})_{x_{i'j'}\in\Gamma(x_{ij})})=0,\ \ x_{ij}\in \overline G_h,
\end{equation}
where $\Gamma(x_{ij})$ is the set of neighbors of $x_{ij}$:
$$\Gamma(x_{ij})=\{x_{i+1,j},x_{i-1,j},x_{i,j-1}\},\ x_{ij}\in G_h;
\quad\Gamma(x_{ij})=\emptyset,\ x_{ij}\in \partial G_h.$$

The function $F_h$ is nondecreasing in each variable, except of $x_{ij}$. In the terminology of \cite{Obe06} the scheme (\ref{eq:3.5}) is \emph{degenerate elliptic}. The inequality
$$F_h(x,r,y)-F_h(x,r',y)=\beta' (r-r')>0, \ \ \beta'=\min\{\beta,1\}\ \ \text{for}\ r>r',$$
means, that scheme is \emph{proper} \cite{Obe06}.
Furthermore, the scheme is \emph{Lipshitz continuous} with constant $K_h$:
\begin{align}
& | F_h[x,z]-F_h[x,z']| \le K_h \|z-z'\|_{\infty},\label{eq:3.6}\\
K_h &=\max\{1,\beta\}+\frac{\sigma^2}{h_1^2}+\frac{|k|l+\overline a}{h_1}+\frac{\overline a}{h_2}.\nonumber
\end{align}
We use the notation $F_h[x,v_h]$ for the left-hand side of (\ref{eq:3.5}), and $\|z\|_\infty=\max\{|z_{ij}|:z_{ij}\in \overline G_h\}$. The proof of (\ref{eq:3.6}) is based on the elementary inequality
$$ |\max_{q\in Q}\phi(x,q)-\max_{q\in Q}\phi(y,q)|\le\max_{q\in Q} |\phi(x,q)-\phi(y,q)|, $$
which is valid for a continuous function $\phi$ and a compact set $Q$.

In \cite{Obe06} (Theorem 7) it is shown, that the operator $S_\rho(v)=v-\rho F_h[x,v]$ is a strict contraction in the space of mesh functions, equipped with norm $\|\cdot\|_\infty$, for sufficiently small $\rho$:
$$ \| S_\rho(u)-S_\rho(v)\|_\infty\le\gamma\|u-v\|_\infty,\ \ \gamma=\max\{1-\rho\beta',\rho K_h\}.$$
It follows that (\ref{eq:3.5}) has an unique solution $v_h$, which coincides with the fixed point of $S_\rho$, and it can be approximated by the iterations
\begin{equation} \label{eq:3.7}
v^{n+1}=S_\rho(v^n),\ \ \|v_h-v^n\|_\infty\to 0
\end{equation}
with an arbitrary $v^0$.

Theorem \ref{th:1} allows to justify the convergence of mesh functions $v_h$, $h\to 0$ to the value function $v$ by the Barles-Souganidis method \cite[Theorem 2.1]{BarSou91}. To do this one should check the \emph{stability}, \emph{monotonicity} and \emph{consistency} properties of the finite difference scheme (\ref{eq:3.5}) (see \cite{BarSou91} for the definitions).

The monotonicity property means that the function $F_h(x_{ij},v_{ij},(v_{ij}-v_{i'j'})_{x_{i'j'}\in\Gamma(x_{i,j}) })$ is nonincreasing in $v_{i'j'}$, and follows from the fact that the scheme is degenerate elliptic. Furthermore, since $0\le\psi\le 1/\beta$, we get the inequalities
$$ F_h[x,0]\le F_h[x,v_h]=0\le F_h[x,1/\beta],$$
which imply the stability property:  $0\le v_h\le 1/\beta$ by \cite[Theorem 5]{Obe06}. The proof of the consistency property is based on Taylor's formula: see, e.g., \cite{Tour13} for a simple example. We do not go in details here.

The computer experiments were performed for the following set of parameters: $\beta=0.1$, $\sigma=0.8$, $l=1$, $\overline y=40$, $\overline a=-\underline a=10$. To analyze the influence of the attraction (repulsion) rate on optimal strategies we considered the values $k \in [-10,10]$. The calculations were implemented on the $200\times 200$ grid, covering the rectangle $[-1,1]\times[0,40]$.

Choose $\rho=1/(2 K_h)$ in the iteration method (\ref{eq:3.7}) and take $\underline v^0=0$ and $\overline v^0= 1/ \beta$ as initial values.
It is easy to see that
$$ S_\rho(\underline v^0)-\underline v^0=-\rho F_h[x,\underline v^0]\ge 0,\quad
   S_\rho(\overline v^0)-\overline v^0=-\rho F_h[x,\overline v^0]\le 0.$$
From the monotonicity property of the operator $S_\rho$ (\cite[Theorem 6]{Obe06}) it follows that the iterations with these initial values converge monotonically:
$\underline v^n\uparrow v_h,$ $\overline v^n\downarrow v_h.$ The iterations were performed until
\begin{equation} \label{eq:3.8}
\max_{ij}({\overline v^n_{ij}-\underline v^n_{ij}})/\underline v^n_{ij}\le\varepsilon=0.01.
\end{equation}
Typically this required about $400$ thousand steps.

For $k=2$ the graph and the level sets of the value function are presented in Figure \ref{fig:1}. Clearly, $v$ is symmetric with respect to the axis $x=0$, and is increasing in $y$.
\begin{figure}[ht!]
        \centering
       \includegraphics[width=1\textwidth]{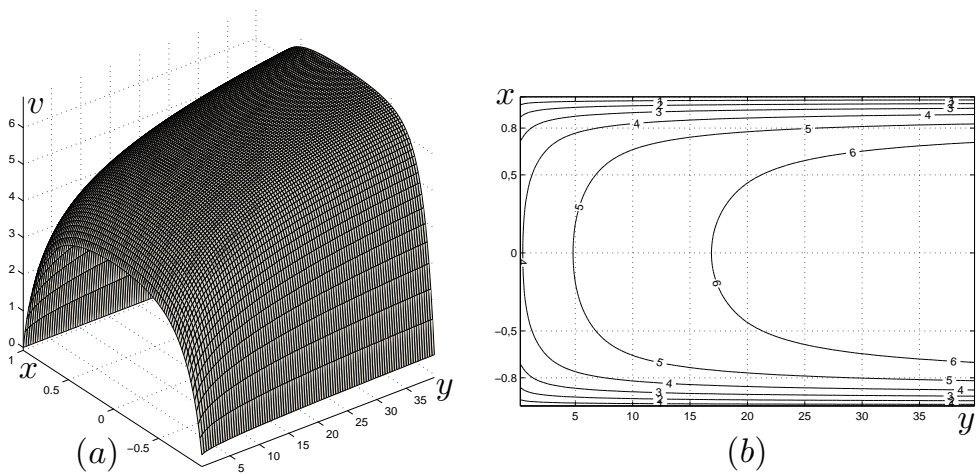}
         \caption{The value function (a) and its level sets (b) for $k=2$.}
         \label{fig:1}
\end{figure}

The switching lines of optimal strategies $\alpha^*$, corresponding to optimal values of $a$ in (\ref{eq:3.3}), are shown in Figure \ref{fig:2}. The middle area, containing the equilibrium, is the no-action region $G_{na}$, where $\alpha^*=0$. In the complement area we have $\alpha^*=-10$ near the upper boundary $x=l$, and $\alpha^*=10$ near the lower boundary $x=-l$.

\begin{figure}[ht!]
      \centering
          \includegraphics[width=1\textwidth]{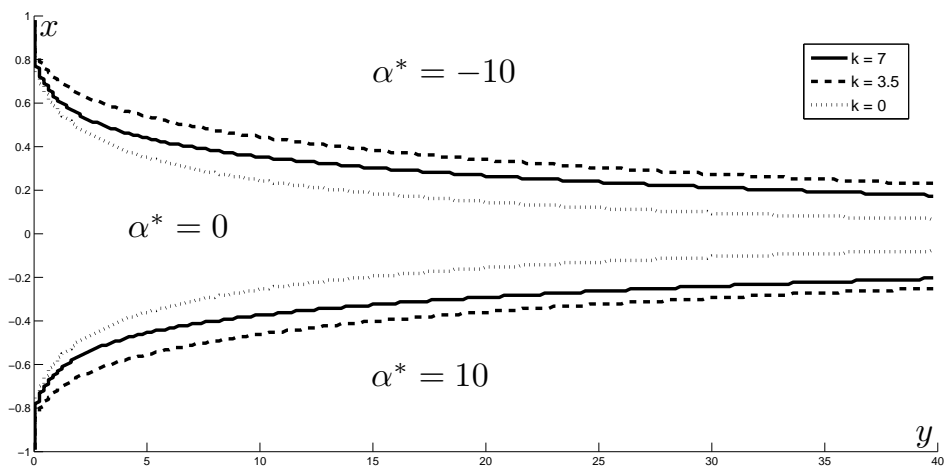}
        \caption{Optimal control in the stable case.}
        \label{fig:2}
\end{figure}

The no-action region widens when $y$ decreases. This means that the controller becomes less active when the recourse $Y$ runs low. More interesting and unexpected effect concerns the "non-monotonic" behavior of the no-action region with respect to $k$. It was detected experimentally that $G_{na}$ becomes wider, when $k$ grows from $0$ to $3.5$. Thus, the controller is less involved in the stabilization of the system, which becomes more stable itself. But, for $k>3.5$ we observe the opposite picture: the no-action region narrows as $k$ grows further!

Optimal strategies for the unstable case $k<0$ are presented in Figure 3. Here no-action regions are much smaller. It is not surprising since it is more difficult to keep the unstable system near the equilibrium point. In contrast to the stable case, here $G_{na}$ shrinks monotonically in $k$.
\begin{figure}[ht!]
        \centering
         \includegraphics[width=1\textwidth]{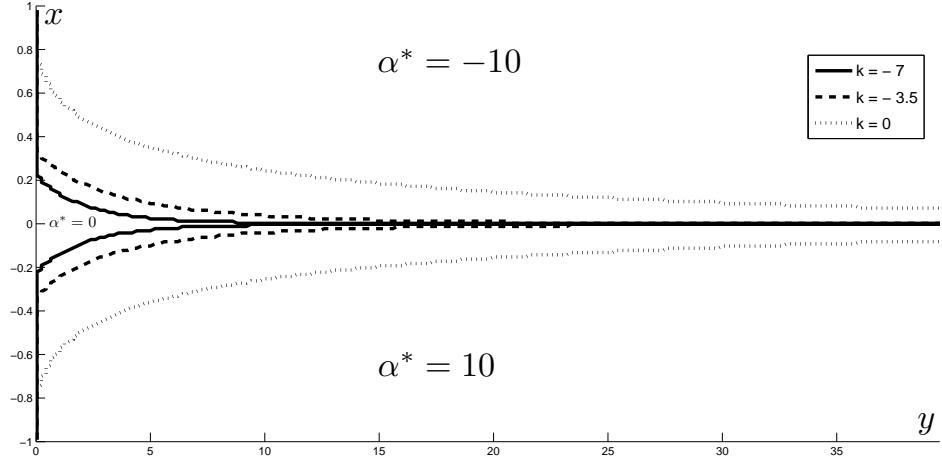}
        \caption{Optimal control in the unstable case.}
          \label{fig:3}
\end{figure}
Also, the value function $v$ is smaller. We do not present the graph of $v$, since it looks similar to Figure \ref{fig:1}(a).

\section{Optimal tracking of a stochastic system} \label{sec:4}
\setcounter{equation}{0}
Consider a random target $X^1$, which should be tracked by the controlled process $X^2$. The fluctuations of $X^1$ are described by the equation
\begin{align*}
dX^1_t &=\mu(X^1_t) dt +\sigma dW_t,\quad \sigma>0,\\
\mu(x_1)&=-k x_1 I_{\{|x_1|\le b\}} - k b I_{\{x_1\ge b\}}+k b I_{\{x_1\le -b\}},\quad b>0
\end{align*}
The case $k>0$ (resp., $k<0$) corresponds to the stable (resp., unstable) equilibrium point $0$ of the correspondent deterministic system. The dynamics of the tracking process $X^2$, controlled by the "fuel expenditure", is unaffected by noise:
$$ dX^2_t =\alpha_t dt,\quad dY_t =-|\alpha_t|dt,\quad \alpha_t\in [\underline a,\overline a].$$
We assume that the tracking is stopped if the target is "lost sight of":
$$\tau=\inf\{t\ge 0:|X^1_t- X^2_t|\ge l\},\quad l>0.$$

For the objective functional (\ref{eq:1.4}) the HJB equation (\ref{eq:2.4}) takes the form
$$ \beta v-1-\mu(x_1)v_{x_1}-\frac{1}{2}\sigma^2 v_{x_1 x_1}+\min_{a\in [\underline a,\overline a]}\{|a| v_y-a v_{x_2}\}=0,\quad
 (x,y)\in G\times (0,\infty), $$
where $G=\{x:|x_1-x_2|<l\}$. The boundary conditions (\ref{eq:2.15}) shapes to
$$ v=0\quad \text{on } \partial G\times [0,\infty);\quad v=\psi\quad \text{on } G\times\{0\},$$
where $\psi$ is the solution of the boundary value problem
\begin{align}
&\beta \psi-1-\mu(x_1)\psi_{x_1}-\frac{1}{2}\sigma^2 \psi_{x_1 x_1}=0,\quad x_1\in (x_2-l,x_2+l),\label{eq:4.1}\\
&\psi(x_2-l,x_2)=\psi(x_2+l,x_2)=0.\label{eq:4.2}
\end{align}

Let us check Assumptions 1-3. Let $\psi_1, \psi_2\in C^2(\mathbb R)$ be a fundamental solution system of the ordinary linear differential equation (\ref{eq:4.1}). Then
$$ \psi(x_1,x_2)=C_1(x_2)\psi_1(x_1)+C_2(x_2)\psi_2(x_1)+1/\beta, $$
where the $C_1$, $C_2$ are uniquely defined by the boundary conditions (\ref{eq:4.2}). It follows that $\psi\in C^2(G)$ and Assumption 1 is satisfied. Assumption 2 holds true since the condition (\ref{eq:2.13A}) is met. To verify Assumption 3 it is enough to show that $\psi$ and its derivatives up to second order are uniformly bounded in $G$. But this property follows the fact that for $|x|$ large enough, $\mu$ is constant, and we have $\psi=\varphi(x_1-x_2)$, where $\varphi(z)$ is defined by
$$\beta \varphi-1-\mu\varphi_z-\frac{1}{2}\sigma^2 \varphi_{zz}=0,\quad z\in (-l,l);\quad \varphi(-l)=\varphi(l)=0.$$

To solve the problem numerically, as in the previous example, we use the monotone finite difference scheme:
\begin{align*}
 0&=\beta v_{i,j,k} -1 -k\mu^+(x_{ijk})\frac{v_{i+1,j,k}-v_{i,j,k}}{h_1}+k\mu^+(x_{ijk})\frac{v_{i,j,k}-v_{i-1,j,k}}{h_1}\\
 &-\sigma^2\frac{v_{i-1,j,k}-2v_{i,j,k}+v_{i+1,j,k}}{2 h_1^2} \\
 &+\min_{a \in [\underline a,\overline a]}
 \left\{|a|\frac{v_{i,j,k}-v_{i,j,k-1}}{h_3}-a^+\frac{v_{i,j+1,k}-v_{i,j,k}}{h_2} +a^-\frac{v_{i,j,k}-v_{i,j-1,k}}{h_2}   \right\}, \quad x_{ijk}\in G_h;\\
0&=v_{ijk}-g(x_{ijk}),  \quad x_{ijk}\in \partial G_h.
\end{align*}
The grid $\overline G_h$ is the subset of points $\{x_{ijk}=(ih_1,jh_2,kh_3)\in\overline G(\overline x,\overline y):(i,j,k)\in \mathbb Z\times\mathbb Z\times\mathbb Z_+\}$. The set $\overline G(\overline x,\overline y)=\{|x_1-x_2|\le l,\ |x_1+x_2|\le \overline x,\ y\in [0,\overline y]\}$ is cut out from $\overline G\times [0,\infty]$. The values $\overline x$, $\overline y$ determine the artificial boundary. As in Section \ref{sec:3}, by $\partial G_h$ we denote the parabolic boundary of $\overline G_h$, that is, $\partial G_h$ contains all points of $\overline G_h\cap\partial \overline G(\overline x,\overline y)$, except of those with maximal values of $k$ index. Other points of the grid we attribute to $G_h$.

The scheme is analyzed along the same lines as in Section \ref{sec:3}. The convergence of its solution $v_h$ to the value function $v$ follows from Theorem \ref{th:1} by the Barles-Souganidis method. The grid function $v_h$ is obtained by the iterations (\ref{eq:3.7}). The initial approximations $\underline v^0=0$, $\overline v^0= 1/ \beta$ and the stopping criterion
(\ref{eq:3.8}) still apply.

In experiments we used the following parameters: $\beta=0.1$, $\sigma=0.8$, $b=2.5$, $l=4/\sqrt 2$, $\overline y=10$, $\overline x=80$, $\overline a=-\underline a=1$. The grid contained $2000\times 100\times 50$ nodes $x_{ijk}$. With $\varepsilon=0.01$ in (\ref{eq:3.8}), iterations typically stopped after 10 thousand steps.

It is convenient to make the rotation transform
$$z_1=(x_1+x_2)/\sqrt 2,\quad z_2=(x_1-x_2)/\sqrt 2$$
and present the results in the new variables $(z_1,z_2)$. The switching lines of optimal control in the stable ($k=0.3$) and unstable ($k=-0.3$) cases are shown in Figures \ref{fig:4} and \ref{fig:5} respectively (for $y=1$).
\begin{figure}[ht!]
        \centering
       \includegraphics[width=1\textwidth]{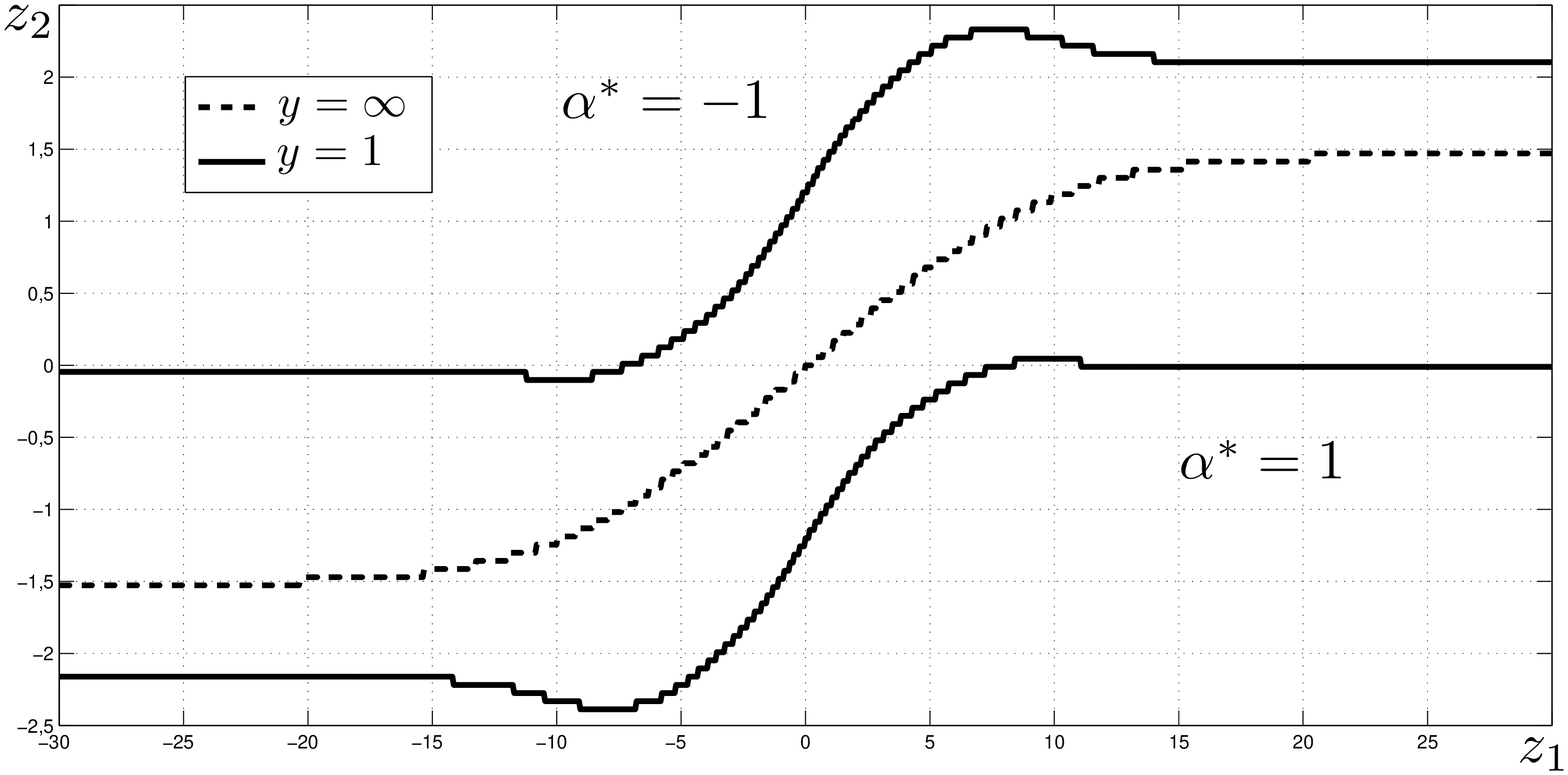}
         \caption{Optimal control in the stable case, $k=0.3$.}
         \label{fig:4}
\end{figure}
\begin{figure}[ht!]
        \centering
       \includegraphics[width=1\textwidth]{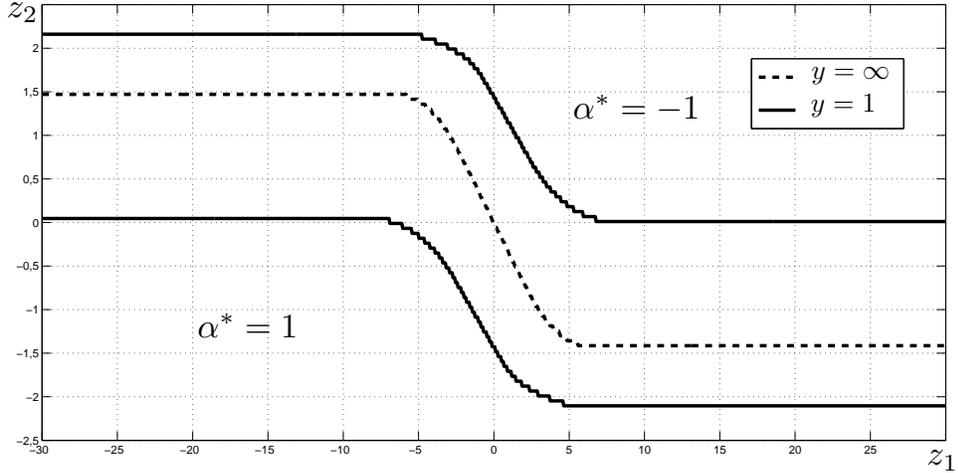}
         \caption{Optimal control in the unstable case, $k=0.3$.}
         \label{fig:5}
\end{figure}

The domains between solid lines correspond to the no-action sets. The dashed lines determine the switching of optimal control for the problem (\ref{eq:2.12}), (\ref{eq:2.13}) with infinite fuel (here the no-action region is empty).
Since $\mu$ is constant for $|x_1|>b$, the switching lines stabilize for $|z_1|$ large enough. Moreover, for large $z_1>0$ the no-action region is located above (resp., below) the line $z_2=0$ in the stable (resp., unstable) case. The reason is that in the stable case, for $\alpha=0$, the point $(Z^1_t,Z^2_t)=(X_t^1+X_0^2,X_t^1-X_0^2)/\sqrt 2$ with large $Z^1_0>0$, on average, goes from the upper boundary of the strip $(z_1,z_2)\in\mathbb R\times (-l,l)$ to its lower boundary. In the unstable case there is an opposite trend. For $z_1<0$ the pictures can be recovered by reflection with respect to the origin.

Examples of graphs and level sets of the value functions $v$ in the stable and unstable cases are given in Figures \ref{fig:6} and \ref{fig:7}.

\begin{figure}[ht!]
        \centering
       \includegraphics[width=1\textwidth]{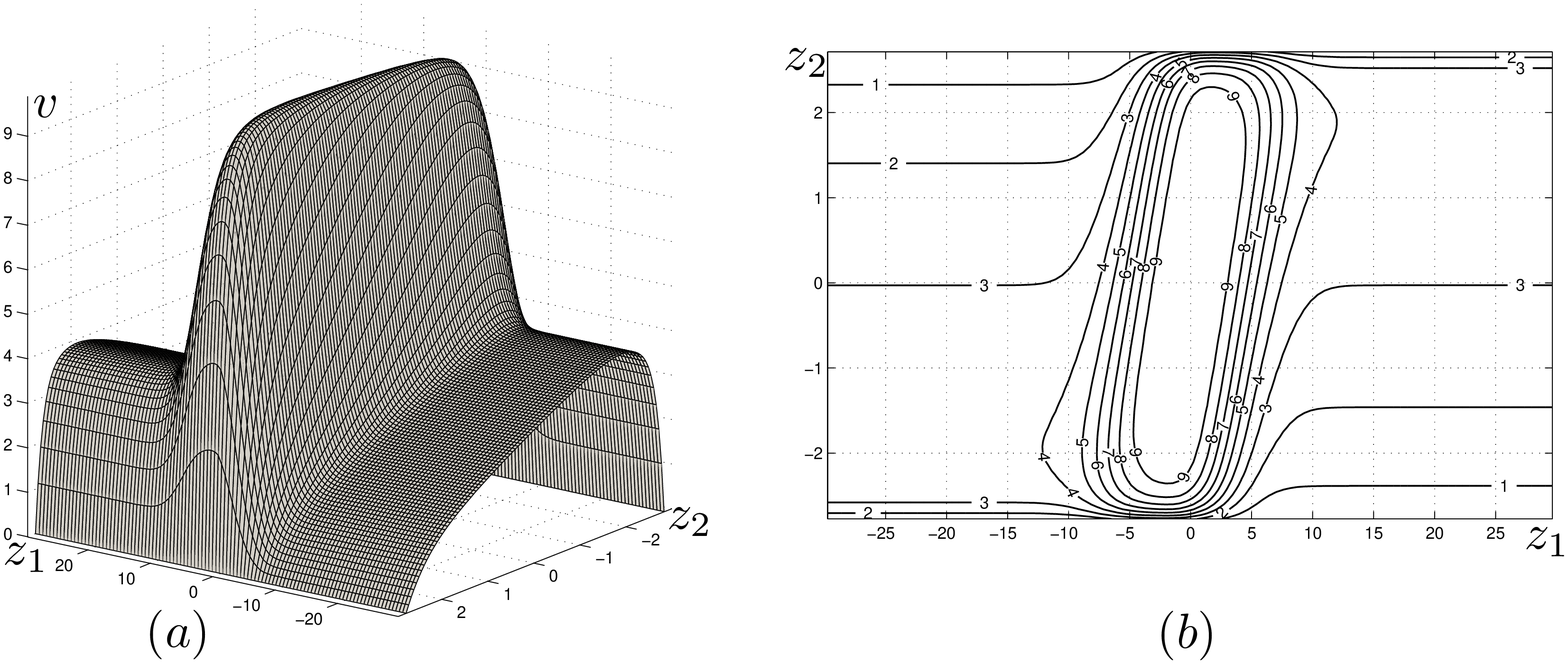}
         \caption{The value function (a) and its level set (b) for $k=0.3$.}
         \label{fig:6}
\end{figure}
\begin{figure}[ht!]
        \centering
       \includegraphics[width=1\textwidth]{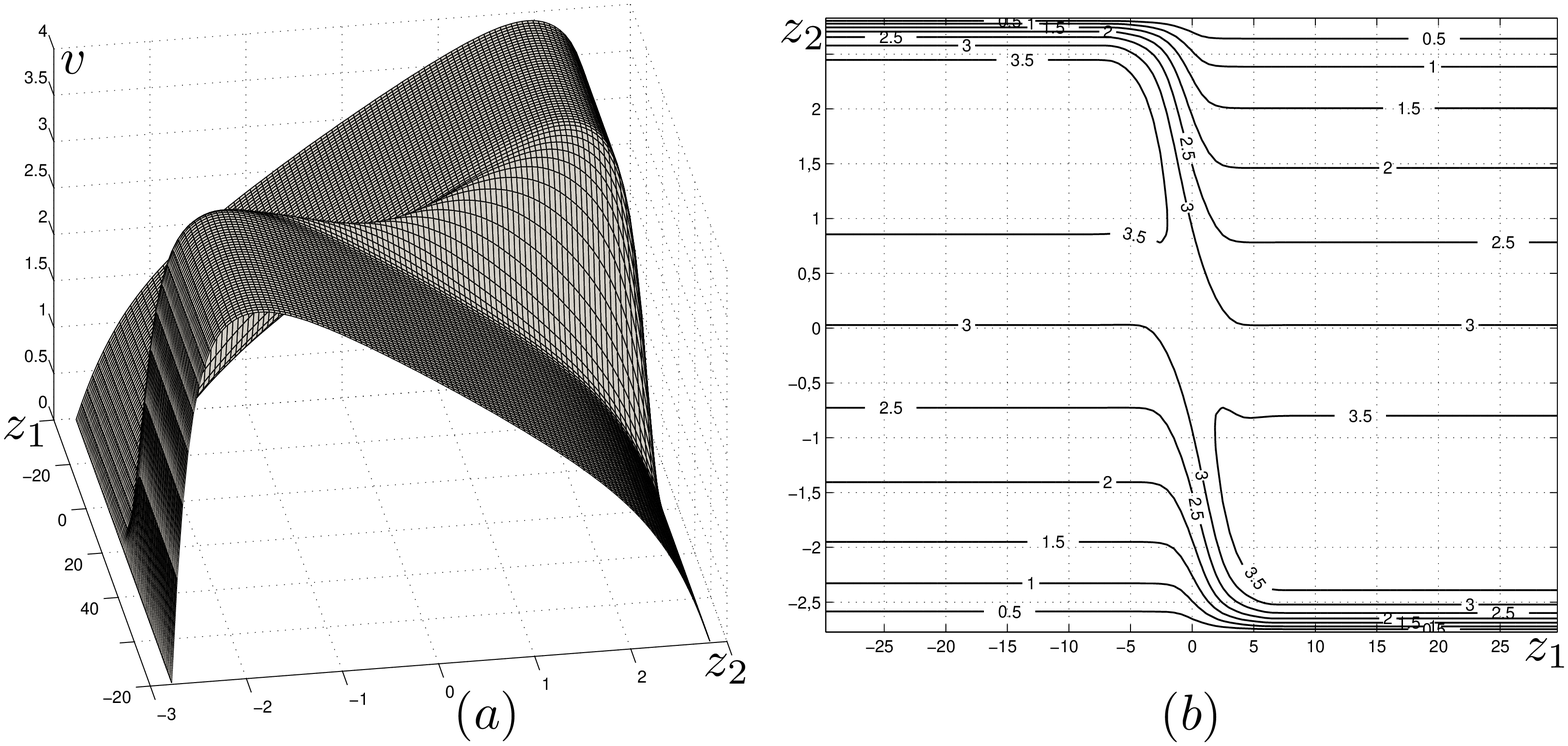}
         \caption{The value function (a) and its level set (b) for $k=-0.3$.}
         \label{fig:7}
\end{figure}

For fixed $z_1, y$ the value functions attain their maximum in $z_2$ in the no-action regions. The global maximum in the stable case is attained at the origin. However, in the unstable case, the maximum points of $v$ are located in the those parts of no-action sets, where $|z_1|$ is large and $v$ is approximately constant in $z_1$.



  \bibliographystyle{plain}
  \bibliography{litFuel}





\end{document}